\newcommand{\rnm}{{\mathbb{R}^n_+}}
\newcommand{\rn}{{\mathbb{R}^n}}
\newcommand{\phii}{\varphi}
\newcommand{\cinfc}{C^\infty_c}
\def\R{{\mathbb {R}}}
\def\N{{\mathbb {N}}}
\def\e{{\mathbf {e}}}
\def\diam{\operatorname {\mathrm{diam}}}
\def\supp{\operatorname {\mathrm{supp}}}
\newtheorem{teo}{Theorem}[section]
\newtheorem{lema}[teo]{Lemma}
\theoremstyle{remark}
\newtheorem{remark}[teo]{Remark}
\theoremstyle{definition}
\numberwithin{equation}{section}
\title[Sobolev inequalities in variable exponent spaces]{A mass transportation approach for Sobolev inequalities in variable exponent spaces}
\author[J.P. Borthagaray, J. Fern\'andez Bonder and A. Silva]{Juan Pablo Borthagaray, Juli\'an Fern\'andez Bonder and Anal\'{\i}a Silva}
\address[J.P. Borthagaray and J. Fern\'andez Bonder]{IMAS - CONICET and Departamento de Matem\'atica, FCEyN - Universidad de Buenos Aires, Ciudad Universitaria, Pabell\'on I  (1428) Buenos Aires, Argentina.}
\address[A. Silva]{IMASL - CONICET and Departamento de Matem\'atica, Universidad Nacional de San Luis (5700) San Luis, Argentina}
\email[J.P. Borthagaray]{jpbortha@dm.uba.ar}
\email[A. Silva]{acsilva@unsl.edu.ar}
\email[J. Fern\'andez Bonder]{jfbonder@dm.uba.ar}
\urladdr[J. Fern\'andez Bonder]{http://mate.dm.uba.ar/~jfbonder}
\subjclass[2010]{46E35,49J40}
\keywords{Sobolev inequalities, variable exponents, mass transportation}
\begin{document}

\begin{abstract}
In this paper we provide a proof of the Sobolev-Poincar\'e inequality for variable exponent spaces by means of mass transportation methods, in the spirit of \cite{CE-N-V}. The importance of this approach is that the method is flexible enough to deal with different inequalities. As an application, we also deduce the Sobolev-trace inequality improving the result of \cite{Fan} by obtaining an explicit dependence of the exponent in the constant.
\end{abstract}

\maketitle

\section{Introduction}

The goal of this paper is to show how mass transportation techniques can be applied to prove Sobolev inequalities in the context of variable exponent spaces.

Mass transportation is a subject that originates in the work of Monge in the XVIII century (cf. \cite{Monge}) and was mathematically stated in modern terms in the work of Kantorovich in the 1940s, \cite{Kantorovich}.

This topic has experienced a revolution since the by now classical paper of Brenier in 1987 (see \cite{Brenier1, Brenier2}). It is by now almost impossible to give a complete list of references or even topics where mass transportation methods are applied. We refer to the excellent books of Villani \cite{Villani1, Villani2}.

The application of mass transportation methods to Sobolev inequalities was first made by Cordero-Erausquin, Nazaret and Villani in \cite{CE-N-V}. See also \cite{Nazaret} where the trace inequality was studied.

The first inequality that we are going to deal with is the {\em Sobolev-Poincar\'e inequality}.

Given a measurable function $p\colon \R^n\to \R$ such that
\begin{equation}\label{p-p+}
1\le p_-:=\inf p \le p_+ := \sup p < n
\end{equation}
(here and throughout the paper, by inf and sup we mean the essential infimum and the essential supremum respectively), the Sobolev-Poincar\'e inequality states the existence of a constant $C>0$ such that
\begin{equation}\label{Sobolev-Poincare}
\|f\|_{p^*} \le C \|\nabla f\|_p
\end{equation}
for every $f\in C^\infty_c(\R^n)$, where $p^* = \frac{np}{n-p}$ and the norms are the so-called Luxemburg norms associated to the modular
$$
\rho_{r}(f):=\int_{\R^n} |f|^{r}\, dx.
$$
That is,
$$
\|f\|_{p^*} := \inf\left\{\lambda>0\colon \rho_{p^*}\left(\frac{f}{\lambda}\right)\le 1\right\}, \qquad \|\nabla f\|_{p} := \inf\left\{\lambda>0\colon \rho_{p}\left(\frac{|\nabla f|}{\lambda}\right)\le 1\right\}
$$

The validity of \eqref{Sobolev-Poincare} for constant exponents $p$ is well-known and we refer to the book of Adams, \cite{Adams}.

For variable exponents, the validity of \eqref{Sobolev-Poincare} was established in \cite{Diening, Edmunds-R1, Edmunds-R2, H-H}. See \cite[Theorem 8.3.1]{libro} for a proof.

The hypotheses on $p$ for \eqref{Sobolev-Poincare} to hold are, in addition to \eqref{p-p+}, that $p$ be globally log-H\"older continuous. See \cite[Chapter 4]{libro} for the definition of log-H\"older continuity.

Moreover, in \cite[Theorem 8.3.1]{libro}, it is shown that the constant $C$ in \eqref{Sobolev-Poincare} depends only con $n, p_+$ and the log-H\"older constant of $p$ denoted by $c_{\log}(p)$.

The proof in \cite{libro} is based in Harmonic Analysis techniques. More precisely, they use the boundedness of the Hardy--Littlewood maximal function and of the Riesz potentials in variable exponent spaces.

The mass transportation approach to this problem, is more direct and elementary. The only technical result that is needed is Brenier's theorem that asserts the existence of a transport $T$ between two probability measures and that this transport is the gradient of a convex function. This approach is the same that was used by Cordero-Erausquin, Nazaret and Villani in \cite{CE-N-V} dealing with the constant exponent case. See next section for the details.

Even though this method provides a more elementary proof of the Sobolev--Poincar\'e inequality \eqref{Sobolev-Poincare}, the main drawback is that we end up with a more restrictive hypotheses on the exponent $p$. We require that $p$ be differentiable. More precisely, we need to ask for $p\in W^{1,s}(\R^n)$, for some $s>n$.

Nevertheless, since the proof simpler, it is flexible enough to deal with other inequalities. As an example, in Section 3, we treat the Sobolev trace inequality,
\begin{equation}\label{trace}
\|f\|_{p_*, \R^{n-1}}\le C\|\nabla f\|_{p, \R^n_+}
\end{equation}
for every $f\in C^\infty_c(\R^n)$ with $C>0$ independent of $f$, where $p_* = \frac{(n-1)p}{n-p}$ and the (Luxemburg) norms are defined analogously as before.

The proof of this inequality in the constant exponent case is classical (see again the book of Adams \cite{Adams}). Also recall that mass transportation methods were applied in the constant exponent case for the Sobolev trace inequality \eqref{trace} by Nazaret in \cite{Nazaret}. Our proof follows closely the one in \cite{Nazaret}.

The variable exponent case of \eqref{trace} was proved by Fan in \cite{Fan}. Here we recover Fan's result and, moreover, by our method we can give the precise dependence of $C$ on the regularity of the exponent $p$ that was missing in \cite{Fan}.

\subsection*{Organization of the paper} After this short introduction, the paper is divided into two sections. Section 2 deals with the Sobolev-Poincar\'e inequality \eqref{Sobolev-Poincare} and Section 3 deals with the Sobolev trace inequality \eqref{trace}.

\section{The Sobolev-Poincar\'e inequality}

As we mentioned in the introduction, the mass transportation approach to the proof of the Sobolev-Poincar\'e inequality \eqref{Sobolev-Poincare} follows the lines of the paper by Cordero-Erausquin, Nazaret, Villani \cite{CE-N-V}.  We will point out the differences in the arguments when they arrive.

First, observe that is enough to prove \eqref{Sobolev-Poincare} for nonnegative functions $f\in C^\infty_c(\R^n)$. So, let $f, g \in \cinfc(\rn)$ be nonnegative functions such that $\|f\|_{p^*} = \|g\|_{p^*}=1$ and define the probability densities $F = f^{p^*}$ and $G=g^{p^*}$.

According to Brenier's theorem, \cite{Brenier1, Brenier2} (see \cite[Theorem 3.8]{Villani1} for a proof), there exists a convex lower semi-continuous function $\phii$ such that $\nabla \phii$
transports the measure $d\mu=F\, dx$ (optimally) to $d\nu = G\, dx$.

This implies, in particular, that for all $\psi \in L^1(d\nu)$ the following transport identity is valid:
\begin{equation}\label{transport.identity}
\int_\rn (\psi\circ \nabla \phii)\, d\mu = \int_\rn \psi\, d\nu.
\end{equation}
Moreover, the following Monge-Amp\`ere equation
\begin{equation}\label{Monge-Ampere}
F(x) = G (\nabla \phii(x)) \det(D_A^2\phii(x))
\end{equation}
holds $\mu-$a.e., where $D_A^2\phii$ is the Alexandrov Hessian of $\phii$. See \cite{McCann}.

Let us recall that the Alexandrov Hessian of a convex function $\phii$ is the absolute continuous part of the distributional Hessian of $\phii$ and that by Alexandrov's Theorem, it coincides with the Hessian of $\phii$ a.e. with respect to Lebesgue's measure. See \cite[Chapter 6]{Evans-Gariepy}.

Let us define the exponent
$$
p_*=\frac{(n-1)p}{n-p}=\left(1-\frac{1}{n}\right)p^*.
$$
Therefore, by \eqref{transport.identity} and \eqref{Monge-Ampere},
$$
\int_\rn g^{p_*}\, dx = \int_\rn G^{1-\frac{1}{n}}\, dx = \int_\rn (G^{-\frac{1}{n}}\circ \nabla \phii) \, d\mu = \int_\rn F^{1-\frac{1}{n}} \det \left(D^2_A \phii \right)^\frac{1}{n}\, dx.
$$

Since the distributional Hessian of $\phii$ is nonnegative (recall that $\phii$ is convex), it holds that $D^2_A\phii \leq D^2 \phii$ in the distributional sense. Thus, by applying first the arithmetic-geometric inequality for nonnegative symmetric matrices and integrating by parts we obtain
$$
\int_\rn g^{p_*}\, dx \leq \frac{1}{n} \int_\rn F^{1-\frac{1}{n}} \Delta_A \phii\, dx \le \frac{1}{n} \int_\rn F^{1-\frac{1}{n}} \Delta\phii\, dx =
- \frac{1}{n} \int_\rn \nabla \left(F^{1-\frac{1}{n}}\right) \cdot \nabla \phii \, dx.
$$
So far, the argument has been exactly as in Cordero-Erausquin, Nazaret, Villani's paper \cite{CE-N-V} where the reader can check the missing details in the arguments.

Now is where the difference appear since $p$ is not constant. From now on, assume that $p$ is smooth and that the bounds \eqref{p-p+} hold.

A straightforward computation gives
$$
\nabla \left(F^{1-\frac{1}{n}}\right) = \frac{(n-1) n f^{p_*} \log f}{(n-p)^2} \nabla p + p_* f^{p_*-1} \nabla f.
$$
 So we obtain the estimate
 \begin{align*}
\int_\rn g^{p_*}\, dx \leq \frac{(n-1)}{n-p_+} \Big(\frac{1}{n-p_+} \int_\rn f^{p_*} |\log f|  |\nabla \phii | |\nabla p|\,  dx + \frac{p_+}{n} \int_\rn f^{p_*-1} |\nabla f| |\nabla \phii|\, dx \Big).
\end{align*}

We need to estimate the two integrals on the right hand side of the previous inequality. Applying H\"older's inequality for variable exponent spaces for both integrals, we obtain
$$
\int_\rn f^{p_*} |\log f|  |\nabla \phii| |\nabla p|\, dx \leq 2 \| f \log f |\nabla p| \|_{p} \| f^{p_*-1} |\nabla\phii| \|_{p'}
$$
and
$$
\int_\rn f^{p_*-1} |\nabla f| |\nabla \phii|\, dx \leq 2 \| \nabla f \|_{p} \| f^{p_*-1} |\nabla\phii| \|_{p'}.
$$

\begin{remark}\label{p_->1}
In this point of the argument, we require that $p_->1$. Nevertheless, since the constants entering in the estimates do not depend on $p_-$ as we will see, it can be easily deduced that the result still holds for $p_-=1$.
\end{remark}

The modular giving the norm $ \| f^{p_*-1} |\nabla\phii| \|_{p'}$ can be expressed as
$$
\int_\rn \left(f^{p_*-1} |\nabla \phii| \right)^{p'}\, dx = \int_\rn |\nabla \phii|^{p'} F \,dx = \int_\rn |y|^{(p'\circ \nabla \phii^{-1})} G \, dy,
$$
where we have used the transport identity \eqref{transport.identity}. This term is completely analogous to the one appearing in \cite{CE-N-V} except that the exponent is not constant and therefore depends on $\phii$. So we need to bound it independently of $\phii$.

Let us consider the function $\eta(x) = \max \{ |x|^{(p')_+}, |x|^{(p')_-} \}$, where
$$(p')_+ = \sup \frac{p}{p-1} = (p_{-})'.$$
Hence
$$
\int_\rn \left(f^{p_*-1} |\nabla \phii| \right)^{p'}\, dx \le \int_\rn \eta G \, dy.
$$

Recalling the relation between the modular and the Luxemburg norm, we prove the following result

\begin{teo}\label{teo:est_transporte}
Let $f\in C^\infty_c(\rn)$ nonnegative and let $p$ be a smooth exponent that verifies \eqref{p-p+}. Assume that $\|f\|_{p^*}=1$, define $\eta(x) = \max \{ |x|^{(p')_+}, |x|^{(p')_-} \}$ and let
\begin{equation}\label{eq:alpha}
\alpha(n,p) = \frac{n-p_+}{2 (n-1)}\sup \frac{\int_\rn g^{p_*}\, dx}{\max\left\{\left(\int_\rn \eta g^{p^*} \, dy \right)^{\frac{1}{(p')_-}}, \left(\int_\rn \eta g^{p^*} \, dy \right)^{\frac{1}{(p')_+}}\right\}},
\end{equation}
where the supremum is taken over all nonnegative functions $g\in C^\infty_c(\rn)$ such that $\|g\|_{p^*}=1$.

Then, the following inequality holds:
\begin{equation}
\alpha(n,p) \leq \frac{1}{n-p_+} \| f \log f |\nabla p| \|_{p} +  \frac{p_+}{n} \| \nabla f \|_{p}.
\label{eq:est_transporte}
\end{equation}
\end{teo}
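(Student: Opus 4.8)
The plan is to assemble the chain of inequalities already established above into a single estimate, convert the modular bound on $\|f^{p_*-1}|\nabla\phii|\|_{p'}$ into a bound on the norm itself, and then take the supremum over all admissible competitors $g$, which by definition produces $\alpha(n,p)$ on the left-hand side. Write $h:=f^{p_*-1}|\nabla\phii|$. The computations preceding the statement give, using the transport identity \eqref{transport.identity} and the definition of $\eta$,
$$
\rho_{p'}(h)=\int_\rn |\nabla\phii|^{p'}\,F\,dx=\int_\rn |y|^{(p'\circ\nabla\phii^{-1})}\,G\,dy\le\int_\rn \eta\,g^{p^*}\,dy=:A(g).
$$

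The key step is the norm--modular relation in variable exponent Lebesgue spaces. Since $1\le(p')_-\le(p')_+<\infty$ (here is where $p_->1$ is used, as in Remark \ref{p_->1}), for any measurable $h$ one has
$$
\min\bigl\{\|h\|_{p'}^{(p')_-},\|h\|_{p'}^{(p')_+}\bigr\}\le\rho_{p'}(h)\le\max\bigl\{\|h\|_{p'}^{(p')_-},\|h\|_{p'}^{(p')_+}\bigr\}.
$$
Combining the lower bound with $\rho_{p'}(h)\le A(g)$ and distinguishing the cases $\|h\|_{p'}\le1$ and $\|h\|_{p'}\ge1$, we obtain in both cases
$$
\|f^{p_*-1}|\nabla\phii|\|_{p'}\le\max\left\{A(g)^{\frac{1}{(p')_-}},A(g)^{\frac{1}{(p')_+}}\right\}=:M(g),
$$
which is precisely the quantity in the denominator of \eqref{eq:alpha}.

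Next I substitute the two H\"older estimates displayed above into the main inequality for $\int_\rn g^{p_*}\,dx$; both of the resulting right-hand sides carry the factor $2\|f^{p_*-1}|\nabla\phii|\|_{p'}\le 2M(g)$, so after factoring,
$$
\int_\rn g^{p_*}\,dx\le\frac{2(n-1)}{n-p_+}\,M(g)\left(\frac{1}{n-p_+}\|f\log f\,|\nabla p|\|_{p}+\frac{p_+}{n}\|\nabla f\|_{p}\right).
$$
Dividing by $\frac{2(n-1)}{n-p_+}\,M(g)$ gives
$$
\frac{n-p_+}{2(n-1)}\cdot\frac{\int_\rn g^{p_*}\,dx}{M(g)}\le\frac{1}{n-p_+}\|f\log f\,|\nabla p|\|_{p}+\frac{p_+}{n}\|\nabla f\|_{p},
$$
and since the right-hand side does not depend on $g$, taking the supremum over all nonnegative $g\in\cinfc(\rn)$ with $\|g\|_{p^*}=1$ yields \eqref{eq:est_transporte}, by the very definition \eqref{eq:alpha} of $\alpha(n,p)$.

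I do not expect a serious obstacle here: the proof is essentially bookkeeping once the pieces above are in place. The one point that demands care is the \emph{direction} in which the norm--modular inequality is used — we must pass from an \emph{upper} bound on a modular to an \emph{upper} bound on a norm, which is exactly what forces the maxima appearing in the definitions of $\eta$ and of $\alpha(n,p)$ and the case split according to whether $\|h\|_{p'}$ is below or above $1$. One should also record, as in Remark \ref{p_->1}, that none of the constants above depends on $p_-$, so the case $p_-=1$ follows by approximation and the restriction $p_->1$ (needed to apply H\"older with the conjugate exponent $p'$) is harmless.
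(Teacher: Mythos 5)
Your proposal is correct and reproduces the argument the paper has in mind (the paper itself leaves the assembly of the preceding estimates implicit, merely pointing to "the relation between the modular and the Luxemburg norm"). You correctly factor the common $2\|f^{p_*-1}|\nabla\phii|\|_{p'}$ out of the two H\"older bounds, pass from the modular upper bound to the norm upper bound via the standard two-case estimate, and take the supremum over competitors $g$ to recover \eqref{eq:alpha}; the bookkeeping of the constant $\frac{n-p_+}{2(n-1)}$ also checks out.
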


This last estimate is exactly the one obtained in \cite{CE-N-V} with the exception of the logarithmic term. Observe that if the logarithmic term is removed from \eqref{eq:est_transporte} then the Sobolev-Poincar\'e inequality \eqref{Sobolev-Poincare} is proved by homogeneity.

\begin{remark}\label{cota.alpha}
It is immediate to see that the term $\alpha(n,p)$ can be bounded below by a term depending only on $n$, $p_+$ and $p_-$. Moreover, since the constant does not degenerate when $p_-=1$ it can be taken depending only con $n$ and $p_+$.
\end{remark}

So, the remaining of the proof will be to bound the logarithmic term by a constant times some norm of $|\nabla p|$.

We begin with a lemma.
\begin{lema} \label{loglema}Let $f\in C^\infty_c(\R^n)$ be nonnegative, and $p$ be a smooth exponent such that \eqref{p-p+} holds. Assume $\|f\|_{p^*} = 1$ and take $s>n$. Then,
\begin{equation}
\int_\rn f^p |\log f|^p |\nabla p|^p\, dx \le  (C_1 +C_2) \max\{\|\nabla p\|_s^{p_-}, \|\nabla p\|_s^{p_+}\},
\label{eq:logcontrol}
\end{equation}
where
\begin{equation} \label{eq:defctes} \begin{split}
& C_1 = 2 \max \Bigg\{\left(C(n,s) \diam(\supp f) \|1_{\supp f}\|_{p'} \|\nabla f\|_p\right)^{\frac{1}{\left(\frac{s}{s-p}\right)_-}}; \\
& \hspace{2.4cm} \left(C(n,s) \diam(\supp f) \|1_{\supp f}\|_{p'} \|\nabla f\|_p\right)^{\frac{1}{\left(\frac{s}{s-p}\right)_+}}\Bigg\}, \\
& C_2 = \frac{2s(n-1)}{e(s-n)}.
\end{split} \end{equation}
\end{lema}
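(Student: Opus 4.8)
The plan is to bound the integral $\int_\rn f^p |\log f|^p |\nabla p|^p\, dx$ by splitting the domain according to whether $f$ is small or large, and on each piece use a different mechanism. First I would use H\"older's inequality for variable exponents to factor out $|\nabla p|$: writing the integrand as $(f|\log f|)^p \cdot |\nabla p|^p$ and pairing with the exponents $\tfrac{s}{p}$ and $\tfrac{s}{s-p}$ (note $s>n>p_+$ so these are admissible variable exponents), one gets something like
$$
\int_\rn f^p |\log f|^p |\nabla p|^p\, dx \le 2\,\big\| |\nabla p|^p \big\|_{s/p}\, \big\| (f|\log f|)^p \big\|_{s/(s-p)},
$$
and $\big\| |\nabla p|^p \big\|_{s/p}$ is, by the relation between modular and Luxemburg norm for the \emph{constant} exponent $s$ after unwinding, controlled by $\max\{\|\nabla p\|_s^{p_-},\|\nabla p\|_s^{p_+}\}$. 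That accounts for the factor $\max\{\|\nabla p\|_s^{p_-},\|\nabla p\|_s^{p_+}\}$ on the right-hand side, and reduces the problem to bounding $\big\| (f|\log f|)^p \big\|_{s/(s-p)}$ by the absolute constant $C_1+C_2$.

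To bound that norm I would estimate its modular $\int_\rn \big((f|\log f|)^p\big)^{s/(s-p)}\,dx = \int_\rn (f|\log f|)^{ps/(s-p)}\,dx$ and then convert back to the Luxemburg norm, which produces the outer $\max\{(\cdot)^{1/(\frac{s}{s-p})_-},(\cdot)^{1/(\frac{s}{s-p})_+}\}$ appearing in $C_1$. Split $\{f>0\}$ into $E_1=\{0<f\le 1\}$ and $E_2=\{f>1\}$. On $E_2$, I expect to use the elementary inequality $t|\log t|\le C\, t^{1+\delta}$ for $t\ge 1$ (for suitable small $\delta$), reducing the integral over $E_2$ to $\int f^{(1+\delta)ps/(s-p)}\,dx$, which for the right choice of $\delta$ is at most $\int f^{p^*}\,dx=1$ by the normalization $\|f\|_{p^*}=1$ together with $\diam(\supp f)$ controlling the $L^{p^*}$-against-lower-exponent comparison; the exponent arithmetic here uses $ps/(s-p)<p^*$ precisely because $s>n$. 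On $E_1$, where $|\log f|$ can blow up but $f$ is small, I would instead use the bound $t|\log t|\le \tfrac{1}{e}$, actually $ \sup_{0<t\le 1} t^\theta|\log t| = (\theta e)^{-1}$ for $\theta>0$; choosing $\theta$ to leave a factor $f^{p^*}$ (or a comparable integrable power) inside, the $E_1$ integral is bounded by a constant of the shape $\tfrac{s(n-1)}{e(s-n)}$, which is exactly $C_2$ up to the factor $2$.

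The part of $C_1$ involving $\diam(\supp f)$, $\|1_{\supp f}\|_{p'}$ and $\|\nabla f\|_p$ should come from comparing an $L^q$ norm of $f$ for $q<p^*$ with $\|\nabla f\|_p$: on the bounded set $\supp f$ one has, by H\"older and a crude Sobolev/Poincar\'e-type estimate with explicit constant $C(n,s)$, a bound $\|f\|_{q}\le C(n,s)\diam(\supp f)\|1_{\supp f}\|_{p'}\|\nabla f\|_p$, and raising this to the relevant power $1/(\frac{s}{s-p})_\pm$ and converting modular-to-norm gives the first term in $C_1$. The main obstacle I anticipate is the bookkeeping of variable-exponent arithmetic: making sure that at each application of variable-exponent H\"older one tracks the correct conjugate exponent, and that passing between modulars and Luxemburg norms one always inserts the correct $\max\{(\cdot)^{1/r_-},(\cdot)^{1/r_+}\}$ (never the reverse), and finally choosing the auxiliary exponents ($\delta$ on $E_2$, $\theta$ on $E_1$) so that the powers of $f$ land \emph{below} $p^*$ uniformly — this is where the hypothesis $s>n$ is essential and must be used quantitatively to keep the constants finite.
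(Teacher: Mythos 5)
Your overall strategy matches the paper's: split the domain according to whether $f\le 1$ or $f>1$, use H\"older to peel off $|\nabla p|^p$ against the constant exponents $s/p$ and $s/(s-p)$, and use elementary inequalities for $t\,|\log t|$ on each piece. (A minor structural difference: you apply H\"older once globally and then split the modular, whereas the paper splits first and applies H\"older on each piece, so it obtains two separate Luxemburg norms that it sums; your version yields a slightly different form $\max\{(A+B)^{1/q_-},(A+B)^{1/q_+}\}$ rather than $C_1+C_2$, but that is a bookkeeping issue, not an error.)

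However, you have the two pieces swapped, and your treatment of $E_1=\{0<f\le 1\}$ as written does not work. You claim that on $E_1$ one can choose $\theta$ ``to leave a factor $f^{p^*}$ (or a comparable integrable power) inside'' and obtain the \emph{pure} constant $C_2 = \tfrac{2s(n-1)}{e(s-n)}$. But on $E_1$ the exponent you have after H\"older is $r=\tfrac{ps}{s-p}$, which is \emph{strictly below} $p^*$ precisely because $s>n$, and since $f\le 1$ the inequality $f^{r}\le f^{p^*}$ goes the \emph{wrong way}. To raise the power to $p^*$ you would need $\theta<0$, for which $\sup_{0<t\le 1}t^\theta|\log t|=\infty$. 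Indeed the claim is not merely unproved but false: taking $f$ close to a flat plateau of small height $\eps$ with $\|f\|_{p^*}=1$ gives $\int_{E_1}(f|\log f|)^{r}\,dx\sim \eps^{\,r-p^*}|\log\eps|^{r}\to\infty$ as $\eps\to 0$, so no bound by a constant depending only on $(n,s)$ can hold on $E_1$. The correct move on $E_1$, which is what the paper does, is to bound $f^{r}|\log f|^{r}\le C(n,s)\,f$ (leaving the \emph{first} power inside) and then control $\int f$ by the $L^1$ Poincar\'e inequality plus variable-exponent H\"older; it is this step that produces $\diam(\supp f)\,\|1_{\supp f}\|_{p'}\,\|\nabla f\|_p$, i.e.\ the constant $C_1$. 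Conversely, on $E_2=\{f>1\}$ your choice of $\delta$ with $(1+\delta)\tfrac{ps}{s-p}\le p^*$ already lands at a power bounded by $f^{p^*}$ (since $f\ge1$), so $\int_{E_2}$ is controlled by the normalization $\int f^{p^*}\le 1$ alone with the pure constant $C_2$; the $\diam(\supp f)$ you invoke there is superfluous. So the fix is simply to exchange the roles of the two pieces: $E_1$ yields $C_1$ via Poincar\'e, $E_2$ yields $C_2$ via the normalization; once that is done your argument coincides with the paper's.
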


\begin{proof}
The proof is rather elementary. Let us split the integral into two parts, one where $f\le 1$ and the other one where $f>1$. First,  we have
\begin{equation*}
\int_{\{f \leq 1 \} } f^p |\log f|^p |\nabla p|^p\, dx \le
2\|f^p |\log f|^p 1_{\{f \leq 1 \} } \|_{\frac{s}{s-p}}\| |\nabla p|^p\|_{\frac{s}{p}}.
\end{equation*}

Now we need to bound the first norm. Let $r:=\frac{sp}{s-p}$ and observe that $r_->1$. Observe that if $f\le 1$ one has the control $f |\log f|\le e^{-1}<1$ and so
$$
f^r |\log f|^r \le f^{r_-} |\log f|^{r_-} = f (f^{r_- -1} |\log f|^{r_-}) \le C(r_-) f,
$$
where
$$
C(r_-) = \left(\frac{r_-}{e(r_- -1)}\right)^{r_-}\le C(n,s).
$$
Hence, by Poincar\'e inequality in $L^1$ and H\"older inequality for variable exponents,
\begin{equation}\label{f<1}
\begin{split}
\int_{\{f\le 1\}} f^r |\log f|^r\, dx &\le C(n,s) \int_\rn f\\
&\le C(n,s)2\diam(\supp f)\int_\rn |\nabla f|\, dx\\
&\le C(n,s) 4\diam(\supp f) \|1_{\supp f}\|_{p'} \|\nabla f\|_p
\end{split}
\end{equation}
On the other hand, it holds
$$
\||\nabla p|^p\|_{\frac{s}{p}}\le \max\{\|\nabla p\|_s^{p_-}, \|\nabla p\|_s^{p_+}\}
$$
and therefore
\begin{align*}
\int_{\{f \leq 1 \} } f^p |\log f|^p |\nabla p|^p\, dx \leq & C_1 \max\{\|\nabla p\|_s^{p_-}, \|\nabla p\|_s^{p_+}\} ,
\end{align*}
where $C_1$ is defined according to \eqref{eq:defctes}.

Now, for $f\ge 1$ we write
$$
f^p |\log f|^p = f^r \left( f^{1-\frac{r}{p}} |\log f| \right)^p.
$$
Then, if $r>p$ it is immediate to check that,
$$
f^{1-\frac{r}{p}} |\log f| \le f^{1-(\frac{r}{p})_-} |\log f| \le  \frac{1}{e((\frac{r}{p})_- - 1)} =: K(r,p).
$$

In consequence, we obtain
\begin{equation}\label{f>1}
\begin{split}
\int_{\{f>1\}} f^p |\log f|^p |\nabla p|^p\, dx &\leq K(r,p) \int_\rn f^r |\nabla p|^p\, dx\\
&\le 2 K(r,p) \|f^r\|_{\frac{s}{s-p}} \||\nabla p|^p\|_{\frac{s}{p}}.
\end{split}
\end{equation}

and if we take $r=p^*(1-\frac{p}{s})$, which verifies $r>p$ since $s>n$, it follows that $\frac{rs}{s-p} = p^*$ and so
$$
\|f^r\|_{\frac{s}{s-p}} = 1.
$$
Observe that by our choice of $r$ we actually have that
$$
K(r,p) \le C_2.
$$

Therefore, from \eqref{f>1} we obtain
\begin{equation}\label{f>1_2}
\int_{\{f>1\}} f^p |\log f|^p |\nabla p|^p\, dx \le  C_2 \max\{\|\nabla p\|_s^{p_-}, \|\nabla p\|_s^{p_+}\}.
\end{equation}
Putting together \eqref{f<1} and \eqref{f>1_2} we conclude the desired result.
\end{proof}


\begin{remark}An immediate consequence of the previous lemma is that the Sobolev inequality holds if $p$ satisfies the condition
\begin{equation}
\max\{\|\nabla p\|_s^{p_-}, \|\nabla p\|_s^{p_+}\}< \delta ,
\label{eq:hipp}
\end{equation}
for some $\delta > 0$ small enough.

In fact, \eqref{eq:est_transporte} and \eqref{eq:logcontrol} will give
\begin{equation}\label{eq:casisobolev}
\begin{split}
\alpha(n,p) - \frac{1}{n-p_+}\left(C_2\delta \right)^{\frac{1}{p+}} \le C_1^{\frac{1}{p+}}\max\{\|\nabla p\|_s^{p_-}, \|\nabla p\|_s^{p_+}\}^{\frac{1}{p+}}+ \frac{p_+}{n} \|\nabla f\|_p.
\end{split}
\end{equation}
So, by Remark \ref{cota.alpha}, we can choose $\delta=\delta(n,p_+)$ such that, if $\supp f\subset B_R$,
$$
0<D(n, p_+) \leq \left(B(n,p_+,R,s)\max \left\{\|\nabla p\|_s^{\frac{p_-}{p_+}}, \|\nabla p\|_s\right\} + C(n,p_+)\right)\| \nabla f \|_p,
$$
and the general inequality for all $f \in \cinfc(\rn)$ follows by homogeneity.
\end{remark}

Condition \eqref{eq:hipp} is by no means restrictive; indeed, it is possible to obtain the inequality for any exponent by means of a scaling argument. Given any measurable function $f$ on $\rn$ and $k\geq 1$ consider the function $f_k(x) = f \left( \frac{x}{k} \right)$. Then, the following lemma holds

\begin{lema} \label{scaling} Given $f\in C^\infty_c(\rn)$ nonnegative, $p$ a smooth exponent satisfying \eqref{p-p+} and $r\ge 1$ constant, we have
\begin{equation}
k^{\frac{n}{p^*_+}} \| f \|_{p^*} \leq \| f_k \|_{p_k^*} \leq k^\frac{n}{p^*_-} \| f \|_{p^*} \label{eq:festrella}
\end{equation}
\begin{equation}
k^{\frac{n}{p_+}-1} \| \nabla f \|_p \leq \| \nabla f_k \|_{p_k} \leq k^{\frac{n}{p_-} - 1} \| \nabla f \|_p \label{eq:gradf}
\end{equation}
\begin{equation}
k^{-p_+ + \frac{n}{r}} \| |\nabla p|^p \|_{r} \leq \| |\nabla p_k|^{p_k} \|_{r} \leq k^{-p_- + \frac{n}{r}} \| |\nabla p|^p \|_{r} \label{eq:gradp}
\end{equation}
\end{lema}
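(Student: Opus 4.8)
The plan is to deduce all three estimates \eqref{eq:festrella}, \eqref{eq:gradf} and \eqref{eq:gradp} from a single mechanism: evaluate the relevant modular on the dilated object, change variables by $y=x/k$, and absorb the resulting powers of $k$ by means of the elementary pointwise bound $k^{-q_+}\le k^{-q(x)}\le k^{-q_-}$, valid for every $k\ge 1$ and every exponent $q$ with $1\le q_-\le q_+<\infty$. Inserted into a modular, this gives, for every $\mu\ge 1$,
$$
\mu^{-q_+}\,\rho_q(v)\le \rho_q\left(\frac{v}{\mu}\right)\le \mu^{-q_-}\,\rho_q(v).
$$
Before starting I would record the bookkeeping facts that a dilation is a bijection of $\rn$, so $(p_k)_\pm=p_\pm$; that $(p_k)^*=(p^*)_k=:p_k^*$; and that, since $t\mapsto nt/(n-t)$ is increasing on $[0,n)$, we have $\sup p^*=(p_+)^*$ and $\inf p^*=(p_-)^*$, so that the symbols $p^*_\pm$ occurring in \eqref{eq:festrella} are unambiguous and carry no dependence on $k$.

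For \eqref{eq:festrella} the change of variables yields $\rho_{p_k^*}(f_k/\lambda)=\int_{\rn}\big(f(x/k)/\lambda\big)^{p^*(x/k)}\,dx=k^n\rho_{p^*}(f/\lambda)$, so $\|f_k\|_{p_k^*}$ is exactly the value of $\lambda$ for which $\rho_{p^*}(f/\lambda)=k^{-n}$. Here I would use that, when $f\not\equiv 0$, the map $\lambda\mapsto\rho_{p^*}(f/\lambda)$ is continuous (since $p^*_+<\infty$ by \eqref{p-p+} and $f\in\cinfc(\rn)$), strictly decreasing, and tends to $0$ as $\lambda\to\infty$ and to $+\infty$ as $\lambda\to 0^+$; hence the Luxemburg norm is a genuine level set of the modular (the case $f\equiv 0$ being trivial). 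Writing $\|f_k\|_{p_k^*}=\mu\|f\|_{p^*}$, the hypothesis $k\ge 1$ forces $k^{-n}\le 1$ and therefore $\mu\ge 1$; applying the displayed modular inequality to $v=f/\|f\|_{p^*}$, which has unit modular, gives $\mu^{-p^*_+}\le k^{-n}\le\mu^{-p^*_-}$, that is, $k^{n/p^*_+}\le\mu\le k^{n/p^*_-}$, which is \eqref{eq:festrella}.

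Estimate \eqref{eq:gradf} is the same computation, now starting from $|\nabla f_k(x)|=\frac1k|\nabla f(x/k)|$, which produces $\rho_{p_k}(|\nabla f_k|/\lambda)=k^n\rho_p(|\nabla f|/(k\lambda))$; the extra factor $k$ inside the modular is precisely what turns the exponents $n/p_\pm$ into $n/p_\pm-1$. For \eqref{eq:gradp}, where $r$ is constant and $\|\cdot\|_r$ denotes the ordinary $L^r$ norm, I would instead compute directly that $|\nabla p_k(x)|^{p_k(x)}=k^{-p(x/k)}|\nabla p(x/k)|^{p(x/k)}$, whence $\||\nabla p_k|^{p_k}\|_r^r=k^n\int_{\rn}k^{-rp(y)}|\nabla p(y)|^{rp(y)}\,dy$; bounding $k^{-rp(y)}$ between $k^{-rp_+}$ and $k^{-rp_-}$ and taking $r$-th roots gives \eqref{eq:gradp}.

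None of these steps is deep. The points that require care are: justifying that the Luxemburg norm coincides with the exact level of the continuous, monotone, coercive modular, so that the two-sided modular estimate transfers to the norm without loss; and keeping track of the direction of each inequality, which is exactly where the standing assumption $k\ge 1$ is used, once in each of the three estimates. One could alternatively bypass the continuity discussion altogether by invoking the standard norm--modular relations in variable exponent Lebesgue spaces, applied to $f$, $f_k$, $\nabla f$ and $\nabla f_k$.
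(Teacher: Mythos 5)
Correct, and essentially the same approach as the paper: change variables in the modular, control the resulting $k$-powers by the extremes $p_\pm$ using $k\ge 1$, and transfer the modular estimates to the Luxemburg norms. The only difference is presentational --- the paper absorbs the power of $k$ into the argument of the integrand and then selects $\mu$ explicitly so that the bracketing modular equals $1$, whereas you factor out $k^n$, write the norm ratio as $\mu$, and sandwich it via the elementary modular-scaling inequality $\mu^{-q_+}\rho_q(v)\le\rho_q(v/\mu)\le\mu^{-q_-}\rho_q(v)$.
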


\begin{remark}
Since $p\mapsto p^*$ is monotone with respect to $p$, it follows that $p^*_- = (p_-)^*$ and therefore, by \eqref{p-p+}, that $p^*_-\ge 1^* = \frac{n}{n-1}$. So in Lemma \ref{scaling} the following upper bounds can be obtained
\begin{align}
\label{eq:festrella2}
&\|f_k\|_{p_k^*}\le k^{n-1} \|f\|_{p^*}\\
\label{eq:gradf2}
&\|\nabla f\|_{p_k}\le k^{n-1} \|\nabla f\|_p\\
\label{eq:gradp2}
&\| |\nabla p_k|^{p_k}\|_s\le k^{-1+\frac{n}{s}} \| |\nabla p|^p\|_s
\end{align}
\end{remark}

\begin{proof}
We will prove inequalities \eqref{eq:gradf}. Inequalities \eqref{eq:festrella} follow in the same way. Take $\mu >0$, then
\begin{align*}
\int_\rn \left(\frac{|\nabla f_k|}{\mu}\right)^{p_k}\, dx&= \int_\rn \left(\frac{|\frac{1}{k}\nabla f(\frac{x}{k})|}{\mu}\right)^{p(\tfrac{x}{k})}\, dx\\
&=\int_\rn \left(\frac{|\frac{1}{k}\nabla f|}{\mu}\right)^{p} k^n\, dy\\
&=\int_\rn\left(\frac{|\nabla f|}{\mu k^{1-\frac{n}{p}}}\right)^{p}\,dy
\end{align*}
Recall that $k\geq1$, so that
$$
\int_\rn \left(\frac{|\nabla f|}{\mu k^{1-\frac{n}{p_+}}}\right)^{p}\,dy\leq\int_\rn \left(\frac{|\nabla f_k|}{\mu}\right)^{p_k}\, dx \leq \int_\rn \left(\frac{|\nabla f|}{\mu k^{1 -\frac{n}{p_-}}}\right)^{p}\, dy .
$$
Then, choosing $\mu=\| \nabla f \|_p k^{-1+\frac{n}{p_+}}$, we obtain that
$$
\|\nabla f_k\|_{p_k}\geq\|\nabla f\|_p k^{-1+\frac{n}{p_+}}.
$$
Analogously, it holds that
$$
\|\nabla f_k\|_{p_k} \leq \|\nabla p\|_p k^{-1+\frac{n}{p_-}}.
$$

Now, in order to prove \eqref{eq:gradp}, observe that
$$
\int_{\R^n} |\nabla p_k|^{p_k r}\, dx = \int_{\R^n} |\nabla p|^{pr} k^{n-pr}\,dy.
$$
As before,
$$
\int_{\R^n} |\nabla p|^{pr} k^{n-p_+r}\,dy \leq \int_{\R^n} |\nabla p_k|^{p_k r}\,dx\leq\int_{\R^n}|\nabla p|^{pr} k^{n-p_-r}\,dy
$$
Inequality \eqref{eq:gradp} follows easily.
\end{proof}

\begin{remark}
As observed before, the constant $\alpha(n,p)$ can be bounded below by a constant depending only on $n$ and  $p_+$. Therefore, since $(p_k)_+ = p_+$, it follows that $\alpha(n,p_k)$ is bounded below independently of $k$.
\end{remark}

\begin{remark}\label{eq:delta} Take $\delta$ in \eqref{eq:casisobolev} so that the left hand side is $\frac{\alpha(n,p)}{2}$. Since $(p_k)_+ = p_+$, from \eqref{eq:casisobolev} we have\begin{equation}
\frac{\alpha(n,p_k)}{2} \leq C_{1,k}^\frac{1}{p_+} \max\{\||\nabla p_k\|_s^\frac{p_-}{p_+}, \|\nabla p_k\|_s\} + \frac{p_+}{n} \frac{\| \nabla f_k \|_{p_k}}{\| f_k \|_{p_k^*}},
\label{eq:sobolevconk}
\end{equation}
\begin{align*}
C_{1,k}=2 \max \Bigg\{ & \left(C(n,s) \diam(\supp f_k) \|1_{\supp f_k}\|_{p'} \frac{\|\nabla f_k\|_{p_k}}{\| f_k \|_{p_k^*}}\right)^{\frac{1}{\left(\frac{s}{s-p_k}\right)_-}};\\
&\left(C(n,s) \diam(\supp f_k) \|1_{\supp f_k}\|_{p'} \frac{\|\nabla f_k\|_{p_k}}{\| f_k \|_{p_k^*}}\right)^{\frac{1}{\left(\frac{s}{s-p_k}\right)_+}}\Bigg\}
\end{align*}
for all $f_k \in \cinfc(\rn)$, and $k\in \N$ such that $p_k$ verifies condition \eqref{eq:hipp}.

However, since $s>n$ it follows that $-p_-+\frac{n}{s}<0$ and so, by \eqref{eq:gradp} there exists $k_0=k_0(\delta, n, s, \||\nabla p|^p\|_{s})$ such that $p_k$ verifies condition \eqref{eq:hipp} for every $k\ge k_0$.
\end{remark}

With all this preliminaries we are now in position to prove the main theorem of the section.

\begin{teo} \label{sobolev}
Assume that $\nabla p\in L^s(\R^n)$ for some  $s > n$.

Then, the following Sobolev-Poincar\'e inequality holds
\begin{equation}
\| f \|_{p^*} \leq C(n,p_+, R) \| \nabla p \|_{s}^{\gamma(n,p_+)} \max \left\{\|\nabla f \|_p^{\frac{1}{\left(\frac{s}{s-p}\right)_+}}, \|\nabla f \|_p^{\frac{1}{\left(\frac{s}{s-p}\right)_-}} \right\}^\frac{1}{p_+}+ C(n,p_+) \| \nabla f \|_p
\label{eq:sobolev}
\end{equation}
for any $f\in C^\infty_c(\rn)$ such that $\supp f \subset B_R$.
\end{teo}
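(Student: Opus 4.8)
The plan is to remove the smallness restriction \eqref{eq:hipp} from the conditional Sobolev inequality obtained in the preliminary remarks by dilating simultaneously the function $f$ and the exponent $p$, and then to translate the resulting estimate back by means of the scaling Lemma \ref{scaling}. Since replacing $f$ by $|f|$ does not increase $\|\nabla f\|_p$ and leaves $\|f\|_{p^*}$ unchanged, it suffices to treat nonnegative $f$. Combining Theorem \ref{teo:est_transporte}, Lemma \ref{loglema} (through the relation between a modular and its Luxemburg norm) and the fact, from Remark \ref{cota.alpha}, that $\alpha(n,\cdot)$ is bounded below by a positive constant depending only on $n$ and the upper bound of the exponent, one records for a threshold $\delta=\delta(n,p_+)$ the conditional inequality \eqref{eq:sobolevconk}: if a smooth exponent $q$ satisfies \eqref{eq:hipp} and $h\in\cinfc(\rn)$ is nonnegative with $\supp h\subset B_\rho$, then
$$
\frac{\alpha(n,q)}{2}\le \widetilde{C}_1^{\,1/q_+}\,\max\{\|\nabla q\|_s^{q_-/q_+},\,\|\nabla q\|_s\}+\frac{q_+}{n}\,\frac{\|\nabla h\|_q}{\|h\|_{q^*}},
$$
where $\widetilde{C}_1$ is the explicit constant of \eqref{eq:defctes} written for $h$, with $\diam(\supp h)\le 2\rho$, $\|1_{\supp h}\|_{q'}\le\|1_{B_\rho}\|_{q'}$, and $\|\nabla h\|_q$ replaced by $\|\nabla h\|_q/\|h\|_{q^*}$.

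Given now an arbitrary nonnegative $f\in\cinfc(\rn)$ with $\supp f\subset B_R$ and $\nabla p\in L^s$, $s>n$, I would apply this with $h=f_k$, $q=p_k$. By Lemma \ref{scaling} one has $\|\nabla p_k\|_s=k^{\frac ns-1}\|\nabla p\|_s$; since $\frac ns-1<0$ and $(p_k)_\pm=p_\pm$, there is a least $k_0\in\N$ for which $p_{k_0}$ satisfies \eqref{eq:hipp}, and an elementary computation gives $k_0\le C(n,p_+)\,(1+\|\nabla p\|_s)^{s/(s-n)}$, so that $k_0$ is controlled by a fixed power of $\|\nabla p\|_s$ (if $p$ already satisfies \eqref{eq:hipp}, take $k_0=1$). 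Moreover $\supp f_{k_0}\subset B_{k_0R}$, hence $\diam(\supp f_{k_0})\le 2k_0R$ and $\|1_{\supp f_{k_0}}\|_{p'}\le\|1_{B_{k_0R}}\|_{p'}$, both of them finite quantities depending only on $n,p_+,R$ and $k_0$.

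The remaining, and main, step is bookkeeping. One feeds into the conditional inequality for the pair $(f_{k_0},p_{k_0})$ the scaling bounds of Lemma \ref{scaling} and of the remark following it — in particular $\|f_{k_0}\|_{p_{k_0}^*}\ge k_0^{n/p^*_+}\|f\|_{p^*}$, $\|\nabla f_{k_0}\|_{p_{k_0}}\le k_0^{\frac n{p_-}-1}\|\nabla f\|_p$, $\|\nabla p_{k_0}\|_s=k_0^{\frac ns-1}\|\nabla p\|_s$, together with \eqref{eq:festrella2}--\eqref{eq:gradp2} — clears the factor $\|f_{k_0}\|_{p_{k_0}^*}$, and applies Young's inequality to split off the terms in which $\|f\|_{p^*}$ occurs with an exponent $<1$; these terms come from $\widetilde{C}_1^{\,1/q_+}$, whose exponents $\dfrac{1}{\left(\tfrac{s}{s-p}\right)_\pm p_+}$ lie in $(0,1)$, and they are precisely the source of the fractional powers of $\|\nabla f\|_p$ in the first term of \eqref{eq:sobolev}. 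What then has to be verified is purely arithmetic: on substituting $k_0\sim\|\nabla p\|_s^{s/(s-n)}$, the accumulated powers of $k_0$ must assemble into a single factor $\|\nabla p\|_s^{\gamma(n,p_+)}$, the surviving $R$- and $k_0$-dependence collapsing into a constant $C(n,p_+,R)$, while the terms not involving $\nabla p$ contribute $C(n,p_+)\|\nabla f\|_p$; this yields \eqref{eq:sobolev}. The hypothesis $p_->1$ used along the way is removed exactly as in Remark \ref{p_->1}, no constant degenerating as $p_-\to1$.

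I expect this last exponent count to be the real obstacle: the conditional inequality \eqref{eq:sobolevconk} is not positively homogeneous of degree one in $f$ — the constant $\widetilde{C}_1$ depends on the ratio $\|\nabla f\|_p/\|f\|_{p^*}$ through fractional powers — so one cannot simply normalize $\|f\|_{p^*}=1$; one has to carry these exponents through the Young step and then check, by a tedious but entirely elementary computation, that the powers of $k_0\sim\|\nabla p\|_s^{s/(s-n)}$ organize into the clean factor $\|\nabla p\|_s^{\gamma(n,p_+)}$ of \eqref{eq:sobolev}, with $\gamma$ depending only on $n$ and $p_+$.
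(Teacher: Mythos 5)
Your proposal follows the same route as the paper's proof: dilate $f$ and $p$ simultaneously until the rescaled exponent $p_{k}$ satisfies the smallness condition \eqref{eq:hipp}, apply the conditional inequality \eqref{eq:sobolevconk} to the pair $(f_k,p_k)$, then translate back using Lemma \ref{scaling} together with the explicit choice $k\sim\|\nabla p\|_s^{s/(s-n)}$. You additionally flag, correctly, that the conditional estimate is not degree-one homogeneous in $f$ (the constant $C_{1,k}$ carries sub-unit powers of $\|\nabla f\|_p/\|f\|_{p^*}$), so that a Young-type bookkeeping step really is required to pass to \eqref{eq:sobolev} — the paper compresses exactly this into ``combining the previous inequality with the election of $k$ we made, we can easily conclude'' — and your direct computation $\|\nabla p_k\|_s=k^{n/s-1}\|\nabla p\|_s$ is in fact a cleaner way to select $k$ than the appeal to \eqref{eq:gradp}, which controls $\||\nabla p_k|^{p_k}\|_r$ rather than $\|\nabla p_k\|_s$.
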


\begin{proof}
We have already proved that the desired inequality holds if $\max \{ \| \nabla p \|_{s}^{p_-}, \| \nabla p \|_{s}^{p_+} \}$ is smaller than a fixed constant.

Now, by \eqref{eq:gradp} and the fact that $s > \frac{n}{p_-}$, for any $p$ under our hypothesis we can fix $k>1$ large enough so that the Sobolev inequality holds for the exponent $p_k$. More precisely, it is enough to consider
\begin{equation}
k = \max \left\{ 1,  \frac{ \max \{ \| \nabla p \|_{s}^{p_-}, \| \nabla p \|_{s}^{p_+} \} }{\delta}\right\},
\label{eq:defk}
\end{equation}
where $\delta$ is as in Remark \ref{eq:delta}, so that if $\max \{ \| \nabla p \|_{s}^{p_-}, \| \nabla p \|_{s}^{p_+} \} \leq \delta$, no scaling is needed. In this case, recall that the Sobolev inequality is just \eqref{eq:casisobolev}.

Let us now suppose that $\max \{ \| \nabla p \|_{s}^{p_-}, \| \nabla p \|_{s}^{p_+} \} > \delta$. Given $f \in \cinfc (\rn)$ nonnegative such that $\|f\|_{p^*}=1$, the application of \eqref{eq:festrella}, \eqref{eq:gradf} and the Sobolev inequality \eqref{eq:sobolevconk} for $p_k$ yield
\begin{equation*}\begin{split}
\frac{\alpha(n,p)}{2} \leq  C(n,p_+,s) & k^{\alpha(n,p_+, s)} \\
\max \Bigg\{ & \left(\diam(\supp f) \|1_{\supp f}\|_{p'}\| \nabla f \|_p \right)^{\frac{1}{\left(\frac{s}{s-p}\right)_+}}, \\
& \left(\diam(\supp f)  \|1_{\supp f}\|_{p'}\| \nabla f \|_p \right)^{\frac{1}{\left(\frac{s}{s-p}\right)_+}} \Bigg\}^{\frac{1}{p_+}}
\max \{ \| \nabla p \|_{s}^{p_-}, \| \nabla p \|_{s}^{p_+} \} \\
& + k^{\beta(n,p_+)} \frac{p_+}{n} \| \nabla f \|_p.
\end{split}\end{equation*}

Combining the previous inequality with the election of $k$ we made, we can easily conclude the theorem.
\end{proof}


\section{Trace inequality}

In this section we show the flexibility of mass transportation methods in dealing with Sobolev-type inequalities for variable exponents, applying the same type of arguments to the Sobolev trace inequality \eqref{trace}.

This method in the constant exponent case was first employed by Nazaret in \cite{Nazaret} where, in addition, the author was able to compute the exact value of the optimal constant along with the extremals answering positively to a question raised by Escobar in \cite{Escobar}.

Our arguments in this section follow closely the ones in \cite{Nazaret} until some point where some new terms appear, due to the non constant nature of the exponent.

So, consider $f, g \in \cinfc (\rn)$ nonnegative be such that  such that $F:= f^{p^*}$ and $G:=g^{p^*}$ are probability densities in $\rnm$ and proceeding in the same way as in the previous section we obtain
$$
\int_\rnm g^{p_*}\, dx \leq \frac{1}{n} \int_\rnm  F^{1-\frac{1}{n}}  \Delta \phii\, dx,
$$
where $\Delta \phii$ stands for the distributional laplacian of the convex function $\phii$ such that $\nabla \phii$ transport the measure $d\mu = F\, dx\lfloor_{\rnm}$ into $d\nu = G\, dx\lfloor_{\rnm}$.

For technical reasons, it is convenient to subsitute $\phii$ by a $\psi = \phii- \e \cdot x$, where $\e=(-1,0,\dots, 0)$ and since both functions $\phii$ and $\psi$ have the same laplacian, after integrating by parts the following estimate holds
$$
\int_\rnm g^{p_*}\, dx \leq - \frac{1}{n} \int_\rnm \nabla \left(F^{1-\frac{1}{n}}\right) \cdot \nabla \psi \, dx - \frac{1}{n} \int_{\R^{n-1}} F^{1-\frac{1}{n}}\, dx' + \frac{1}{n} \int_{\R^{n-1}} F^{1-\frac{1}{n}}\nabla\phii \cdot \e\,  dx'.
$$
Since $\nabla \phii\in \rnm$, it follows that $\nabla\varphi\cdot \e\leq0$ on $\R^{n-1}$, so
$$
\int_\rnm g^{p_*}\, dx \leq - \frac{1}{n} \int_\rnm \nabla \left(F^{1-\frac{1}{n}}\right) \cdot \nabla \psi \, dx - \frac{1}{n} \int_{\R^{n-1}} F^{1-\frac{1}{n}} dx'.
$$
Up to know it is exactly the same as in Nazaret's paper \cite{Nazaret}. See that paper for the details.

Now is where the differences arise.

Proceeding as in the previous section we can estimate the first integral in the right hand side,
\begin{align*}
- \frac{1}{n} \int_\rnm \nabla \left(F^{1-\frac{1}{n}}\right) \cdot \nabla \psi  dx & \leq \Big(\big(C(n,p_+,s, R)\|\nabla f\|_{p,\rnm}^{\alpha(n,p_+,s)} + C(n, p_+,s)\big) \max\{\|\nabla p\|_s^{p_-}, \|\nabla p\|_s^{p_+}\}\\
& + C(n,p_+) \| \nabla f \|_{p,\rnm} \Big) \max\left\{\left(\int_\rn \tilde{\eta} G \, dy \right)^{\frac{1}{(p')_-}}, \left(\int_\rn \tilde{\eta} G \, dy \right)^{\frac{1}{(p')_+}}\right\},
\end{align*}
where $\tilde{\eta}(y) = \eta(y-\e) =  \max \{ |y-\e|^{(p')_+}, |y-\e|^{(p')_-} \}$.

Then, we reach the key estimate
\begin{equation} \begin{split}
\int_{\R^{n-1}} f^{p_*}\, dx' & \leq
 \Big(\big(C(n,p_+,s, R)\|\nabla f\|_{p,\rnm}^{\alpha(n,p_+,s)} + C(n, p_+,s)\big) \max\{\|\nabla p\|_s^{p_-}, \|\nabla p\|_s^{p_+}\}\\
& + C(n,p_+) \| \nabla f \|_{p,\rnm} \Big) \max\left\{\left(\int_\rn \tilde{\eta} G \, dy \right)^{\frac{1}{(p')_-}} \hspace{-0.1cm}, \left(\int_\rn \tilde{\eta} G \, dy \right)^{\frac{1}{(p')_+}}\right\} -  n \int_\rnm g^{p_*}\, dx,
\label{eq:trace}
\end{split} \end{equation}
which is valid for all $f, g\in C^\infty_c(\rn)$ nonnegative such that $\|f\|_{p^*, \rnm}=\|g\|_{p^*, \rnm}=1$.

Now, we denote by
$$
\beta(n,p) = \sup \frac{n\int_\rnm g^{p_*}\, dx}{\max\left\{\left(\int_\rn \tilde{\eta} G \, dy \right)^{\frac{1}{(p')_-}}, \left(\int_\rn \tilde{\eta} G \, dy \right)^{\frac{1}{(p')_+}}\right\}}
$$
where the supremum is taken over all nonnegative $g\in C^\infty_c(\rn)$ such that $\|g\|_{p^*,\rnm}=1$.

So, if we assume that $\max\{\|\nabla p\|_s^{p_-}, \|\nabla p\|_s^{p_+}\}< \delta$ for some $\delta<\delta(n,p_+,s)$, namely $C(n,p_+,s)\delta<\frac12 \beta(n,p)$, then
$$
C(n,p_+,s) \max\{\|\nabla p\|_s^{p_-}, \|\nabla p\|_s^{p_+}\} \max\left\{\left(\int_\rn \tilde{\eta} G \, dy \right)^{\frac{1}{(p')_-}}, \left(\int_\rn \tilde{\eta} G \, dy \right)^{\frac{1}{(p')_+}}\right\} - \frac{n}{2}\int_\rnm g^{p_*}\, dx < 0,
$$
for some $g\in C_c^\infty(\rn)$ nonnegative with $\|g\|_{p^*,\rnm}=1$. Observe that at this point we require $p_->1$. Therefore, for such $g$ we have
\begin{equation}\label{casi.traza}
\int_{\R^{n-1}} f^{p_*}\, dx' \leq C(n,p_+,s, R)\max\{\|\nabla p\|_s^{p_-}, \|\nabla p\|_s^{p_+}\} \| \nabla f \|_{p,\rnm}^{\alpha(n,p_+,s)}+ C(n,p_+,s) \| \nabla f \|_{p,\rnm} - B(n,p),
\end{equation}
where $B(n,p)>0$.

The argument for a general exponent will be just as in Theorem \ref{sobolev}, so without loss of generality we can assume that \eqref{casi.traza} holds.

Now, the Sobolev trace inequality \eqref{trace} follows from \eqref{casi.traza} using the same scaling arguments as in \cite{Nazaret}. We sketch these arguments for the reader's convenience.

Assume first that $\|f\|_{p_*,\R^{n-1}}\ge 1$, the other case is completely analogous. Then, from equation \eqref{casi.traza}, it easily follows that
$$
Q(f) \left(\frac{\|\nabla f\|_p}{\|f\|_{p^*}}\right)^{(p_*)_+}\leq C_1 \left(\frac{\|\nabla f \|_p}{\| f \|_{p^*}}\right)^\alpha + C_2 \frac{\|\nabla f \|_p}{\| f \|_{p^*}} - B,
$$
where $\supp f\subset B_R$, $C_1 = C_1(n,p_+,s,R)$, $C_2 = C_2(n,p_+,s)$, $\alpha=\alpha(n,p_+,s)$, $B=B(n,p_+,p_-)$ and
$$
Q(f)=\frac{\int_{\R^{n-1}}f^{p_*} \, dx'}{\|\nabla f \|_p^{(p_*)_+}}.
$$

Equivalently, considering $t=\frac{\|\nabla f\|_p}{\|f\|_{p^*}}$,
$$
Q(f) \leq  \left( C_1 t^\alpha + C_2 t - B\right) t^{-(p_*)_+} =: h(t).
$$

Now, it is easy to see that $h(t)$ is bounded above for $t>0$ for some constant depending on $C_1, C_2, B, \alpha$ and $p_+$ which in turn depends on $n,p_+,s,  \| \nabla p\|_{s, \rnm}$ and $R$.

We conclude that there exists some $C(n,p_+,s,\|\nabla p\|_{s,\rnm}, R)>0$ such that
$$ \int_{\R^{n-1}}f^{p_*} \, dx' \leq C  \|\nabla f \|_{p, \rnm}^{(p_*)_+} $$
for all $f \in \cinfc (\rn)$ satisfying $\|f\|_{p^*, \rnm}\geq 1$.

Analogously, if $f \in \cinfc(\rn)$ is such that $\|f\|_{p^*, \rnm}\leq 1$, it follows that
$$
\int_{\R^{n-1}} f^{p_*} \, dx' \leq C \|\nabla f \|_{p, \rnm}^{(p_*)_-} .
$$

Summing up, we have proved the following theorem.
\begin{teo}\label{teo:trace}
Let $p$ be an exponent satisfying $1<p_-\leq p_+ \leq n$ and $|\nabla p|\in L^s(\rnm)$ for some $s>n$. Then, there exists some constant $C=C(n,p_+,p_-,s, \|\nabla p\|_{s,\rnm}, R)$ such that
$$
\|f\|_{p_*,\R^{n-1}}\le C \|\nabla f\|_{p,\rnm},
$$
for every $f\in C^\infty_c(\rn)$ such that $\supp f\subset B_R$.
\end{teo}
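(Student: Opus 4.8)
The proof is essentially assembled from the material developed just before the statement; what remains is to organize it and carry out the final homogenization. First I would reduce to nonnegative $f,g\in\cinfc(\rn)$ normalized by $\|f\|_{p^*,\rnm}=\|g\|_{p^*,\rnm}=1$, set $F=f^{p^*}$, $G=g^{p^*}$ (probability densities on $\rnm$), and invoke Brenier's theorem to get a convex potential $\phii$ with $\nabla\phii$ transporting $\mu=F\,dx\lfloor_{\rnm}$ to $\nu=G\,dx\lfloor_{\rnm}$. Running the Section~2 computation on $\rnm$ — the Monge--Amp\`ere identity \eqref{Monge-Ampere}, the bound $D^2_A\phii\le D^2\phii$, the arithmetic--geometric inequality, and integration by parts, now retaining the boundary integral over $\R^{n-1}$ — and then replacing $\phii$ by $\psi=\phii-\e\cdot x$ (same Laplacian) while using that $\nabla\phii\in\rnm$ forces $\nabla\phii\cdot\e\le0$ on $\R^{n-1}$, one obtains
\begin{equation*}
\int_\rnm g^{p_*}\,dx \le -\frac1n\int_\rnm\nabla\bigl(F^{1-\frac1n}\bigr)\cdot\nabla\psi\,dx-\frac1n\int_{\R^{n-1}}F^{1-\frac1n}\,dx'.
\end{equation*}

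Next I would estimate the bulk term exactly as in Section~2: expand $\nabla(F^{1-1/n})$, use H\"older for variable exponents, and bound $\|f^{p_*-1}|\nabla\psi|\|_{p'}$ through the transport identity \eqref{transport.identity}, which here produces $\tilde\eta(y)=\eta(y-\e)$ because of the shift by $\e$. The logarithmic contribution is controlled by Lemma~\ref{loglema} and the pure-gradient contribution by the argument of Theorem~\ref{sobolev}; combining these yields the key estimate \eqref{eq:trace}. Introducing $\beta(n,p)$ as above (bounded below by a constant depending only on $n,p_+,p_-$, which is where the hypothesis $p_->1$ enters), I would then choose $g$ with $\|g\|_{p^*,\rnm}=1$ making the coefficient of the $\tilde\eta$-factor negative after subtracting $\tfrac n2\int_\rnm g^{p_*}$, provided $\max\{\|\nabla p\|_s^{p_-},\|\nabla p\|_s^{p_+}\}<\delta$ with $\delta=\delta(n,p_+,s)$; this is precisely \eqref{casi.traza}.

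To remove the smallness hypothesis I would repeat the scaling trick of Theorem~\ref{sobolev}: since $s>n$ gives $-p_-+n/s<0$, Lemma~\ref{scaling} (inequalities \eqref{eq:festrella}--\eqref{eq:gradp}) shows that for $k$ large enough $p_k$ satisfies \eqref{eq:hipp}, and the same lemma transfers \eqref{casi.traza} for $p_k$ back to $p$ at the cost of explicit powers of $k$; the choice of $k$ in terms of $\|\nabla p\|_s$ then collapses these powers into the constant, using that $\beta(n,p_k)$ and the exponent $\alpha(n,p_+,s)$ do not degenerate because $(p_k)_+=p_+$. Finally I would homogenize: assuming $\|f\|_{p^*,\rnm}=1$ and writing $t=\|\nabla f\|_{p,\rnm}$, the estimate \eqref{casi.traza} reads $Q(f)\le h(t):=\bigl(C_1t^{\alpha}+C_2t-B\bigr)t^{-(p_*)_+}$ with $Q(f)=\int_{\R^{n-1}}f^{p_*}\,dx'/\|\nabla f\|_{p,\rnm}^{(p_*)_+}$; since $h$ is bounded above on $(0,\infty)$ by a constant $C(n,p_+,p_-,s,\|\nabla p\|_{s,\rnm},R)$, one gets $\int_{\R^{n-1}}f^{p_*}\,dx'\le C\|\nabla f\|_{p,\rnm}^{(p_*)_+}$ when $\|f\|_{p^*,\rnm}\ge1$, and the analogous bound with exponent $(p_*)_-$ when $\|f\|_{p^*,\rnm}\le1$; either way $\|f\|_{p_*,\R^{n-1}}\le C\|\nabla f\|_{p,\rnm}$.

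The two places where the argument genuinely departs from the classical one — and where care is needed — are the boundary term, whose favorable sign rests on $\nabla\phii\in\rnm$ together with the shift by $\e$, and the propagation of the scaling through every $k$-dependent constant while checking that neither $\beta$ nor the exponents $\left(\frac{s}{s-p}\right)_{\pm}$ blow up. I expect this latter bookkeeping, rather than any single analytic estimate, to be the main obstacle.
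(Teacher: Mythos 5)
Your proposal is correct and follows essentially the same route as the paper: Brenier's map on $\rnm$, the Monge--Amp\`ere/AM--GM step with the boundary integral retained, the shift $\psi=\phii-\e\cdot x$ to exploit $\nabla\phii\cdot\e\le 0$ on $\R^{n-1}$, the key estimate \eqref{eq:trace} with $\tilde\eta$, the smallness reduction via $\beta(n,p)$ leading to \eqref{casi.traza}, the scaling of Lemma~\ref{scaling} to drop the smallness hypothesis, and the final homogenization through $Q(f)$ and the bounded function $h(t)$. The only (minor) place your write-up is loose is exactly where the paper is also loose, namely in which norm ($\|f\|_{p^*,\rnm}$ versus $\|f\|_{p_*,\R^{n-1}}$) the case split for the modular-to-norm passage is taken; this does not represent a departure from the paper's argument.
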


\section*{Acknowledgments}

This paper was partially supported by Universidad de Buenos Aires under grant UBACyT 20020130100283BA, by CONICET under grant PIP 2009 845/10 and by ANPCyT under grant PICT 2012-0153. J. Fern\'andez Bonder and A. Silva are members of CONICET and J.P. Borthagaray is a doctoral fellow of CONICET.

\bibliographystyle{plain}
\bibliography{biblio}

\def\ocirc#1{\ifmmode\setbox0=\hbox{$#1$}\dimen0=\ht0 \advance\dimen0
  by1pt\rlap{\hbox to\wd0{\hss\raise\dimen0
  \hbox{\hskip.2em$\scriptscriptstyle\circ$}\hss}}#1\else {\accent"17 #1}\fi}
\begin{thebibliography}{10}

\bibitem{Adams}
Robert~A. Adams.
\newblock {\em Sobolev spaces}.
\newblock Academic Press [A subsidiary of Harcourt Brace Jovanovich,
  Publishers], New York-London, 1975.
\newblock Pure and Applied Mathematics, Vol. 65.

\bibitem{Brenier1}
Yann Brenier.
\newblock D\'ecomposition polaire et r\'earrangement monotone des champs de
  vecteurs.
\newblock {\em C. R. Acad. Sci. Paris S\'er. I Math.}, 305(19):805--808, 1987.

\bibitem{Brenier2}
Yann Brenier.
\newblock Polar factorization and monotone rearrangement of vector-valued
  functions.
\newblock {\em Comm. Pure Appl. Math.}, 44(4):375--417, 1991.

\bibitem{CE-N-V}
D.~Cordero-Erausquin, B.~Nazaret, and C.~Villani.
\newblock A mass-transportation approach to sharp {S}obolev and
  {G}agliardo-{N}irenberg inequalities.
\newblock {\em Adv. Math.}, 182(2):307--332, 2004.

\bibitem{Diening}
Lars Diening.
\newblock Riesz potential and {S}obolev embeddings on generalized {L}ebesgue
  and {S}obolev spaces {$L^{p(\cdot)}$} and {$W^{k,p(\cdot)}$}.
\newblock {\em Math. Nachr.}, 268:31--43, 2004.

\bibitem{libro}
Lars Diening, Petteri Harjulehto, Peter H{\"a}st{\"o}, and Michael
  R{\ocirc{u}}{\v{z}}i{\v{c}}ka.
\newblock {\em Lebesgue and {S}obolev spaces with variable exponents}, volume
  2017 of {\em Lecture Notes in Mathematics}.
\newblock Springer, Heidelberg, 2011.

\bibitem{Edmunds-R1}
David~E. Edmunds and Ji{\v{r}}{\'{\i}} R{\'a}kosn{\'{\i}}k.
\newblock Sobolev embeddings with variable exponent.
\newblock {\em Studia Math.}, 143(3):267--293, 2000.

\bibitem{Edmunds-R2}
David~E. Edmunds and Ji{\v{r}}{\'{\i}} R{\'a}kosn{\'{\i}}k.
\newblock Sobolev embeddings with variable exponent. {II}.
\newblock {\em Math. Nachr.}, 246/247:53--67, 2002.

\bibitem{Escobar}
Jos{\'e}~F. Escobar.
\newblock Conformal deformation of a {R}iemannian metric to a scalar flat
  metric with constant mean curvature on the boundary.
\newblock {\em Ann. of Math. (2)}, 136(1):1--50, 1992.

\bibitem{Evans-Gariepy}
Lawrence~C. Evans and Ronald~F. Gariepy.
\newblock {\em Measure theory and fine properties of functions}.
\newblock Studies in Advanced Mathematics. CRC Press, Boca Raton, FL, 1992.

\bibitem{Fan}
Xianling Fan.
\newblock Boundary trace embedding theorems for variable exponent {S}obolev
  spaces.
\newblock {\em J. Math. Anal. Appl.}, 339(2):1395--1412, 2008.

\bibitem{H-H}
Petteri Harjulehto and Peter H{\"a}st{\"o}.
\newblock Sobolev inequalities with variable exponent attaining the values 1
  and {$n$}.
\newblock {\em Publ. Mat.}, 52(2):347--363, 2008.

\bibitem{Kantorovich}
L.~V. Kantorovich.
\newblock On a problem of monge (in russian).
\newblock {\em Uspekhi Mat. Nauk.}, 3:225--226, 1948.

\bibitem{McCann}
Robert~J. McCann.
\newblock Existence and uniqueness of monotone measure-preserving maps.
\newblock {\em Duke Math. J.}, 80(2):309--323, 1995.

\bibitem{Monge}
Gaspard Monge.
\newblock M\'emoire sur la th\'eorie des d\'eblais et des remblais.
\newblock {\em Histoire de l'Acad\'emie Royale des Sciences de Paris}, pages
  666--704, 1781.

\bibitem{Nazaret}
Bruno Nazaret.
\newblock Best constant in {S}obolev trace inequalities on the half-space.
\newblock {\em Nonlinear Anal.}, 65(10):1977--1985, 2006.

\bibitem{Villani1}
C{\'e}dric Villani.
\newblock {\em Topics in optimal transportation}, volume~58 of {\em Graduate
  Studies in Mathematics}.
\newblock American Mathematical Society, Providence, RI, 2003.

\bibitem{Villani2}
C{\'e}dric Villani.
\newblock {\em Optimal transport}, volume 338 of {\em Grundlehren der
  Mathematischen Wissenschaften [Fundamental Principles of Mathematical
  Sciences]}.
\newblock Springer-Verlag, Berlin, 2009.
\newblock Old and new.

\end{thebibliography}
\end{document}